\newtheorem{theorem}{Theorem}[section]
\newtheorem{lemma}[theorem]{Lemma}
\newtheorem{claim}[theorem]{Claim}
\theoremstyle{definition}
\theoremstyle{remark}
\newtheorem{remark}[theorem]{Remark}
\numberwithin{equation}{section}
\begin{document}

\title{ A gap theorem of four-dimensional  gradient shrinking solitons}

\author{Zhuhong Zhang}
\address{Department of Mathematics, South China Normal Univeristy, Guangzhou, P. R. China 510275}
\email{juhoncheung@sina.com}





\keywords{Ricci flow, gradient shrinking soliton, maximum principle, curvature pinched estimate. }

\begin{abstract}
In this paper, we will prove a gap theorem for four-dimensional gradient shrinking  soliton. More precisely,  we will show that any complete four-dimensional gradient shrinking soliton with nonnegative and bounded Ricci curvature, satisfying a pinched Weyl curvature,  either is flat, or $\lambda_1 + \lambda_2\ge c_0 R>0$ everywhere for some $c_0\approx 0.29167$, where $\{\lambda_i\}$ are the two least eigenvalues of Ricci curvature. Furthermore, we will show that $\lambda_1 + \lambda_2\ge \frac 13R>0$ under a better pinched Weyl tensor assumption. We point out that the lower bound $\frac 13R$ is sharp.
\end{abstract}

\maketitle



\section{Introduction}

A Riemannian manifold $(M, g)$, couple with  a smooth function $f$, is called gradient Ricci soliton, if there is a constant $\rho$, such that 
$$R_{ij}+ \nabla_i\nabla_j f= \rho g_{ij}.$$
The soliton is called shrinking, steady, or expanding, if $\rho>0$, $\rho=0$, or $\rho<0$, respectively.
Gradient shrinking solitons (GSS for short) play an important role in  the Ricci flow, as they correspond to self-similar solutions, and often arise naturally as limits of dilations of Type I singularities of Ricci flow. They are also generalizations of Einstein metrics. Thus it is a central issue to understand and  classify GSS.

The GSS are complete classified in dimension 2 (see \cite{Ha95}) and 3 (see \cite{Ivey, P2, NW, CCZ}), and in dimension $n\ge 4$ with vanishing Weyl tensor (see \cite{NW, PW, ZH09}). In recent years, there are some other attention to the classification of complete GSS (see \cite{Na, ChenW, Catino, WWW, LNW} ).

For a better understanding and ultimately for the classifications of GSS in higher dimension, one tries to obtain some curvature estimates and other geometric structures on GSS. 
In particular, on a complete non-compact GSS,  Chen \cite{Chen} showed that it will have nonnegative scalar curvature. In addition, Cao-Zhu\cite{CZ08} showed that it has infinite volume (or see  \cite{Cao09} Theorem 3.1). While Cao-Zhou\cite{CaoZhou} obtained a rather precise estimate on asymptotic behavior of the potential function $f$, and showed that it must have at most Euclidean volume growth. 

If the GSS further satisfies some curvature assumptions, then we can get some more precise characteristics. For example, Carrillo-Ni \cite{CN} showed that any GSS with nonnegative Ricci curvature must have zero asymptotic volume ratio, and Munteanu-Wang \cite{MW15} proved that GSS with nonnegative sectional curvature and positive Ricci curvature must be compact. In \cite{MW14}, Munteanu-Wang obtained some curvature estimates on four-dimensional GSS with bounded scalar curvature.
In this paper, we obtain a gap theorem of four-dimensional GSS with pinched curvature. 

Let  $(M^n, g)$ be a complete Riemannian manifold. Denote by $Ric=\{R_{ij}\}$ and $R$ are the Ricci tensor and scalar curvature respectively. It is well known that the Riemannian  curvature tensor $Rm=\{R_{ijkl}\}$ can be decomposed  into the orthogonal components :  
$$Rm=W\oplus \frac 2{n-2} \mathring{Ric} \wedge g \oplus \frac{R}{n(n-1)} g\wedge g,$$
where $W=\{W_{ijkl}\}$ is the Weyl tensor, and $\mathring{Ric}=\{R_{ij}-\frac Rn g_{ij}\}$ is the traceless Ricci curvature. Now we can state 
our main theorem.
\begin{theorem}
Let $(M^4, g)$ be a  complete four-dimensional GSS with bounded and nonnegative Ricci curvature $0\le Ric \le C$, satisfying
$$|W| \le \gamma \Big| |\mathring{Ric} | - \frac 1{2\sqrt{3}}R \Big| \eqno{(*)}$$
for some constant $\gamma<1+\sqrt{3}$.
Then either the soliton is flat, or $$\lambda_1+\lambda_2 \ge c_0 R>0,$$
for some positive constant $c_0=\frac   { (1+2\sqrt{3})   -  \sqrt{5+4\sqrt{3}}   }  {2\sqrt{3}} \approx 0.29167$,
where $\lambda_1$ and $\lambda_2$ are the least two eigenvalues of the Ricci curvature.
\end{theorem}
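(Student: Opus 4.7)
The plan is to argue by contradiction via a weighted Omori--Yau maximum principle. Let $\Phi := \lambda_1 + \lambda_2 - c_0 R$ and suppose toward contradiction that $\inf_M \Phi < 0$ on a non-flat GSS satisfying $(*)$. Since $\mathrm{Ric}$ is bounded and Cao--Zhou's asymptotics for $f$ yield polynomial volume growth, the Omori--Yau principle for the drift Laplacian $\Delta_f := \Delta - \langle\nabla f,\nabla\cdot\rangle$ on shrinking solitons produces a sequence $\{x_k\}$ with $\Phi(x_k) \to \inf\Phi$, $|\nabla\Phi|(x_k)\to 0$, and $\Delta_f\Phi(x_k)\ge -1/k$.

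Next I would derive the evolution of $\Phi$. From the soliton equation one has the Lichnerowicz-type identities
\begin{equation*}
\Delta_f R_{ij} = 2\rho R_{ij} - 2 R_{ikjl} R^{kl}, \qquad \Delta_f R = 2\rho R - 2|\mathrm{Ric}|^2.
\end{equation*}
Because $\lambda_1+\lambda_2$ is only Lipschitz, I would replace it by the smooth upper barrier $P^{ij}_{V_k}R_{ij}$, where $P_{V_k}$ projects onto the $2$-plane spanned by the first two Ricci eigenvectors at $x_k$. This barrier agrees with $\lambda_1+\lambda_2$ at $x_k$ and dominates it nearby, so $x_k$ remains a near-infimum of the smooth function $P^{ij}_{V_k}R_{ij} - c_0R$. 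Applying Omori--Yau to this function and diagonalizing in an eigenframe $\{e_i\}$ yields, in the limit,
\begin{equation*}
0 \,\le\, 2\rho\,\Phi_\infty \,-\, 2\sum_{i=1,2}\sum_{k=1}^{4}R_{ikik}\,\lambda_k \,+\, 2c_0|\mathrm{Ric}|^2,
\end{equation*}
where $\Phi_\infty = \inf\Phi < 0$.

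The crux is algebraic. Expanding each sectional $R_{ikik}$ via the standard $4$D decomposition into Weyl, trace-free Ricci, and scalar parts, one rewrites $\sum_{i=1,2}\sum_k R_{ikik}\lambda_k$ as an explicit polynomial in the eigenvalues plus a Weyl cross term bounded by $C|W|\,|\mathrm{Ric}|$. Substituting the pinching $(*)$ to replace $|W|$ by $\gamma\bigl||\mathring{\mathrm{Ric}}|-\tfrac{R}{2\sqrt 3}\bigr|$, passing to the normalized variables $\mu_i := \lambda_i/R$ (one may assume $R(x_k)\ge\varepsilon>0$ in the limit, else rescaling produces a flat limit), and using the constraints $\sum\mu_i=1$, $\mu_i\ge 0$, $\mu_1+\mu_2<c_0$, the above inequality becomes a polynomial inequality in $(\mu_1,\ldots,\mu_4,\gamma)$. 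The constant $c_0 = \bigl((1+2\sqrt 3)-\sqrt{5+4\sqrt 3}\bigr)/(2\sqrt 3)$ is precisely the smaller root of the quadratic in $c_0$ obtained at the critical pinching $\gamma=1+\sqrt 3$, so for $\gamma<1+\sqrt 3$ the inequality is strict in the wrong direction, producing the contradiction.

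The main obstacle will be the algebraic step: the Weyl cross term $\sum_{i=1,2}\sum_k W_{ikik}\lambda_k$ depends on the relative orientation of $W$ with respect to the Ricci eigenframe, so one must identify the worst-case orientation (by a short optimization using the algebraic symmetries of $W$ in dimension four) and verify that $(*)$ is sharp enough to dominate it up to the constant $\gamma<1+\sqrt 3$. Once $\Phi\ge 0$ is established globally, the same differential inequality reads $\Delta_f\Phi + \langle X,\nabla\Phi\rangle \le -c_1 R\,\Phi$, and the strong maximum principle yields the dichotomy $\Phi>0$ strictly, or $\Phi\equiv 0$; in the latter case, combining with $\mathrm{Ric}\ge 0$ and Chen's nonnegative scalar curvature theorem forces $R\equiv 0$, hence $\mathrm{Ric}\equiv 0$ and, via $(*)$, $W\equiv 0$, so the soliton is flat.
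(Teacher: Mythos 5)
Your overall framework---working elliptically on the soliton with the drift Laplacian identities $\Delta_f R_{ij}=2\rho R_{ij}-2R_{ikjl}R_{kl}$, $\Delta_f R=2\rho R-2|Ric|^2$, a smooth barrier for $\lambda_1+\lambda_2$, and a weighted Omori--Yau sequence---is a legitimate alternative to the paper's route, which instead runs the self-similar Ricci flow generated by the soliton, proves a pinching-improvement lemma (Lemma 3.1) via Hamilton's tensor maximum principle applied to the convex set $\{R\ge r_0,\ b\le(\eta_0-\delta t)R\}$ with $b=(\lambda_3+\lambda_4)-(\lambda_1+\lambda_2)$, and then contradicts self-similarity. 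The problem is that your write-up stops exactly where the real content of the theorem lies. The ``crux is algebraic'' step is asserted, not proved, and you yourself flag the Weyl cross term as ``the main obstacle.'' In the paper this is the whole of Lemma 3.1, and it is not a short optimization: one needs (i) the pointwise bound $W_{1212}^2\le\tfrac23\sum_k W_{1k1k}^2\le\tfrac1{12}|W|^2$ coming from the four-dimensional trace identities ($\sum_i W_{ikik}=0$, $W_{1212}=W_{3434}$); (ii) a genuine case split according to the sign of $|\mathring{Ric}|-\tfrac{R}{2\sqrt3}$, since $(*)$ carries an absolute value and the Weyl term enters with opposite effect in the two cases; and (iii) an optimization in the eigenvalue spreads $x=\tfrac{\lambda_2-\lambda_1}{2}$, $y=\tfrac{\lambda_4-\lambda_3}{2}$, where the extremal configurations differ between the cases ($y=0$ in one, $x=0$ or $x=\tfrac{1-\eta}{4}R$ in the other). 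The constant emerges only from the subcase $x=0$, $|\mathring{Ric}|<\tfrac{R}{2\sqrt3}$, as the positive root of $3\eta^2+(6+2\sqrt3)\eta-(1+2\sqrt3)=0$ in $\eta=1-\tfrac{2(\lambda_1+\lambda_2)}{R}$ (equivalently $c_0=\tfrac{1-\tilde c}{2}$), and showing that this is the binding configuration requires checking all the others, which is the bulk of the paper's computation. Moreover, in your elliptic scheme the inequality must hold on the whole region $\lambda_1+\lambda_2\le c_0R$ (your infimum level), not merely at the threshold; this is fine because the paper's estimate covers all $\eta\in(\tilde c,1]$, but it is something you would have to establish, not assume. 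As it stands, the decisive quantitative step is missing, so the proposal is a plan rather than a proof.

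Two secondary points. First, your endgame via the strong maximum principle is both unnecessary and incorrect as stated: $\Phi\equiv0$ (i.e. $\lambda_1+\lambda_2\equiv c_0R$) does not force $R\equiv0$. What the conclusion actually needs is only that a non-flat shrinker with $Ric\ge0$ has positive scalar curvature, indeed $R\ge r_0>0$ (the paper quotes this, citing Ni); this same fact supplies the scale $R\ge r_0$ that your quantitative step would require, and it also makes your ``rescaling produces a flat limit'' aside superfluous---if $R(x_k)\to0$ along the Omori--Yau sequence then $0\le\lambda_1+\lambda_2\le R\to0$ forces $\Phi(x_k)\to0$, contradicting $\inf\Phi<0$ directly. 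Second, the barrier/viscosity bookkeeping (passing $\Delta_f$ from $\Phi$ to the smooth barrier $P^{ij}_{V_k}R_{ij}-c_0R$ along a non-attained infimum, and verifying the weighted Omori--Yau principle from the quadratic growth of $f$) is standard and fixable, but should be spelled out if you pursue this route.
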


\begin{remark}
In view of the round cylinder $\mathbb{S}^2\times \mathbb{R}^2$ with constant scalar curvature,  the pinched constant $\gamma$ in $(*)$ is necessary. 
Indeed,  $\mathbb{S}^2\times \mathbb{R}^2$ is a non-flat GSS with Ricci curvature $0\le Ric \le \frac 12 R$. Furthermore, 
$| \mathring{Ric} |=\frac 12 R$, and the Weyl tensor satisfies 
$$|W| = \frac 1 {\sqrt{3} } R =(1+\sqrt{3}) \Big| | \mathring{Ric} | - \frac 1{2\sqrt{3}}R \Big|. $$
But the least two eigenvalues of the Ricci curvature $\lambda_1+\lambda_2 \equiv 0$ everywhere.
\end{remark}

Follow by a similar argument, we can show a better result under a better pinched condition as follow.
\begin{theorem}
Let $(M^4, g)$ be a  complete four-dimensional GSS with bounded and nonnegative Ricci curvature $0\le Ric \le C$, satisfying
$$|W| \le \gamma \Big| |\mathring{Ric} | - \frac 1{2\sqrt{3}}R \Big| \eqno{(**)}$$
for some constant $\gamma  \le  \frac{1+\sqrt{3}}{\sqrt{3}}$.
Then either the soliton is flat, or $$\lambda_1+\lambda_2 \ge \frac 13 R>0,$$
where $\lambda_1$ and $\lambda_2$ are the least two eigenvalues of the Ricci curvature.
\end{theorem}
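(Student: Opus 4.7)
The plan is to run the same maximum-principle scheme as in the proof of Theorem 1.1, refining the algebraic step so that the tighter pinching $\gamma \le (1+\sqrt{3})/\sqrt{3}$ translates into the sharp constant $\tfrac{1}{3}$ in place of $c_0\approx 0.29167$. Sharpness of $\tfrac{1}{3}$ is witnessed by the shrinking cylinder $\mathbb{S}^3\times\mathbb{R}$, for which a direct calculation gives $|\mathring{Ric}|=R/(2\sqrt{3})$ (so the right-hand side of the pinching vanishes, which is consistent with the fact that this cylinder is locally conformally flat) while $\lambda_1+\lambda_2=R/3$. Accordingly, take the test function $F=(\lambda_1+\lambda_2)-\tfrac{1}{3}R$; the goal is to prove $F\ge 0$ globally, and then to upgrade to strict positivity of $F$ and of $R$ on any non-flat soliton.

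First I would record the weighted Bochner identities on a GSS (normalize $\rho=\tfrac{1}{2}$),
$$\Delta_f R_{ij}=R_{ij}-2R_{iklj}R_{kl},\qquad \Delta_f R=R-2|Ric|^2,$$
where $\Delta_f=\Delta-\langle\nabla f,\nabla\cdot\rangle$. In an orthonormal frame diagonalizing $Ric$, and via a smooth support-function argument at points where the two smallest eigenvalues coincide, these combine into a differential inequality
$$\Delta_f F\le F-2\mathcal{Q},\qquad \mathcal{Q}=\sum_{p\in\{1,2\}}R_{pkpl}R_{kl}-\tfrac{1}{3}|Ric|^2.$$
Using the decomposition $Rm=W\oplus\tfrac{1}{n-2}\mathring{Ric}\wedge g\oplus\tfrac{R}{n(n-1)}g\wedge g$ from the introduction, split $\mathcal{Q}=\mathcal{Q}_0+\mathcal{Q}_W$, where $\mathcal{Q}_0$ is the value of $\mathcal{Q}$ when $W=0$ and $\mathcal{Q}_W$ is linear in the Weyl components in the Ricci-adapted frame. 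A direct computation yields the clean identity
$$\mathcal{Q}_0=\tfrac{1}{3}\bigl((\lambda_3-\lambda_4)^2+2\lambda_1\lambda_2\bigr),$$
which is non-negative automatically under $Ric\ge 0$, and vanishes exactly in the cylindrical configuration $\lambda_1=0$, $\lambda_3=\lambda_4$.

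The proof then proceeds by contradiction. If $F<0$ somewhere, the Omori--Yau maximum principle on a GSS -- available by the quadratic growth of $f$ together with $|Ric|\le C$ -- produces a sequence along which $\Delta_f F\ge 0$ and $\nabla F\to 0$, forcing $\mathcal{Q}\le\tfrac{1}{2}F<0$ in the limit. The main obstacle is the purely algebraic statement: under $\lambda_1+\lambda_2<R/3$, $Ric\ge 0$, and the pinching $|W|\le\tfrac{1+\sqrt{3}}{\sqrt{3}}\bigl||\mathring{Ric}|-R/(2\sqrt{3})\bigr|$, one must show $\mathcal{Q}_0+\mathcal{Q}_W>\tfrac{1}{2}F$. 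The strategy is to bound $|\mathcal{Q}_W|\le C_0 |W|\cdot|Ric|$ with the optimal constant $C_0$ in the Ricci frame, feed in the pinching, and combine with the explicit formula for $\mathcal{Q}_0$; this reduces the problem to a quadratic inequality in $(\lambda_1,\ldots,\lambda_4)$ subject to $\lambda_1+\lambda_2<R/3$, with $\gamma=(1+\sqrt{3})/\sqrt{3}$ appearing as precisely the threshold at which the Ricci part dominates the Weyl defect. Finally, once $F\ge 0$ globally, the strict positivity $F>0$ and $R>0$ on a non-flat soliton follow from the strong maximum principle applied to $\Delta_f F$ and to $\Delta_f R=R-2|Ric|^2$, combined with Chen's theorem that $R\ge 0$ on a complete GSS.
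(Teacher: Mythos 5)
Your framing is genuinely different from the paper's: the paper turns the soliton into the self-similar Ricci flow, applies Hamilton's tensor maximum principle to show that the set $\{R\ge r_0,\ b\le(\eta_0-\delta t)R\}$ (with $b=(\lambda_3+\lambda_4)-(\lambda_1+\lambda_2)$) is preserved and strictly improved (Lemma 3.1, adjusted in Claim 4.1), and then gets a contradiction from the fact that $b/R$ is invariant under the self-similar rescaling; you instead work elliptically on the soliton with $\Delta_f$, the test function $F=(\lambda_1+\lambda_2)-\frac13 R$, support functions at eigenvalue crossings, and the weighted Omori--Yau principle. That elliptic framing is legitimate in principle, and your identity $\mathcal{Q}_0=\frac13\big((\lambda_3-\lambda_4)^2+2\lambda_1\lambda_2\big)$ is correct; it is the same quadratic that appears in the paper's Lemma 2.1 after passing to a Ricci-adapted frame.

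However, there is a genuine gap: the entire quantitative content of the theorem --- that the specific constant $\gamma\le\frac{1+\sqrt{3}}{\sqrt{3}}$ forces $\mathcal{Q}=\mathcal{Q}_0+\mathcal{Q}_W$ to dominate $\frac12 F$ whenever $\lambda_1+\lambda_2<\frac13R$, $Ric\ge0$ and $(**)$ holds --- is exactly the step you do not carry out; you only state a strategy (``bound $|\mathcal{Q}_W|\le C_0|W|\,|Ric|$ with the optimal constant, feed in the pinching, reduce to a quadratic inequality''). In the paper this is the bulk of the proof: Lemma 3.1 splits into the cases $|\mathring{Ric}|\ge R/(2\sqrt{3})$ and $|\mathring{Ric}|<R/(2\sqrt{3})$, uses the frame estimate $W_{1212}^2\le\frac23\sum_k W_{1k}^2\le\frac1{12}|W|^2$ rather than a product bound by $|W|\,|Ric|$, and minimizes over the eigenvalue configuration at fixed $b=\eta R$ via $x=\frac{\lambda_2-\lambda_1}{2}$, $y=\frac{\lambda_4-\lambda_3}{2}$; the constant $\frac{1+\sqrt{3}}{\sqrt{3}}$ is needed only in the endpoint subcase $x=0$ of Case 2 (inequality (3.2)), where one must have $3\eta^2+1+2\gamma(\sqrt{3}\eta-1)\ge0$ for $\eta>\frac13$, and this factors as $3(\eta-\frac13)(\eta+\frac{2+\sqrt{3}}{\sqrt{3}})$ exactly at that value of $\gamma$ (Claim 4.1). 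A crude bound $|\mathcal{Q}_W|\le C_0|W|\,|Ric|$ is unlikely to reach this threshold, because on the borderline configurations ($\lambda_1\approx0$, $\lambda_2\approx\lambda_3\approx\lambda_4$, $|\mathring{Ric}|\approx R/(2\sqrt{3})$) both $\mathcal{Q}_0$ and the pinching bound on $|W|$ degenerate simultaneously, so the precise coefficients matter. Until this algebraic inequality is actually verified at $\gamma=\frac{1+\sqrt{3}}{\sqrt{3}}$, your argument does not prove the theorem; the remaining analytic scaffolding (Omori--Yau for $\Delta_f$ via quadratic growth of $f$ and bounded curvature, the barrier argument for $\lambda_1+\lambda_2$, and the final positivity of $R$ on a non-flat shrinker) is plausible but secondary.
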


\begin{remark}
Our conclusion $\lambda_1+\lambda_2 \ge \frac 13 R>0$ is sharp due to the example of round cylinder $\mathbb{S}^3\times \mathbb{R}$. Since $\mathbb{S}^3\times \mathbb{R}$ is also a non-flat GSS with Ricci curvature $0 \le Ric \le \frac 13 R$, and $|\mathring{Ric} |=\frac 1{2\sqrt{3}}R$, $|W|=0$. These facts imply that  pinched condition $(*)$ holds. But the least two eigenvalues of the Ricci curvature $\lambda_1+\lambda_2 \equiv \frac 13 R$ everywhere.
\end{remark}

 {\bf Acknowledgements} The author was partially supported by NSFC 11301191.

\section{Preliminaries}

Let $(M^4, g_{ij})$ be a four-dimensional Riemannian manifold. We can decompose the 
Riemannian curvature tensor $\{R_{ijkl}\}$  as follows:  
\begin{align*}
R_{ijkl}=&  W_{ijkl}+\frac{1}{2}(R_{ik}g_{jl}+R_{jl}g_{ik}-R_{il}g_{jk}-R_{jk}g_{il})  \\ 
              &    -\frac{1}{6}R(g_{ik}g_{jl}-g_{il}g_{jk}).
\end{align*}

In this section, we suppose $(M^4, g_{ij})$ is a complete Riemannian manifold with bounded curvature. Now we consider the Ricci flow equation
$$  \left\{
       \begin{array}{lll}
       & \frac{\partial g_{ij}(x, t)}{\partial t}=-2R_{ij}(x,t ), &x\in M^4, t>0, \\[2mm]
       & g_{ij}(x, 0)=g_{ij}(x) , & x\in M^4.
       \end{array}
    \right.
$$
Since the curvature is bounded at the initial metric, it is well known \cite{Shi} that there exist a complete solution $g(t)$ of the Ricci flow on 
a time interval $[0, T)$ with bounded curvature for each $t$.  Furthermore,  the Ricci curvature tensor $\{R_{ij}\}$ and the scalar curvature $R$
evolve by the (PDE) system under the moving frame  (cf. Hamilton \cite{Ha86}):
$$
\begin{aligned}
\frac{\partial}{\partial t}R_{ij} &= \triangle  R_{ij}  + 2\sum\limits_{k,l}R_{ikjl}R_{kl}  \\
 \frac{\partial}{\partial t}R &=\triangle  R + 2|Ric|^2,  \\ 
\end{aligned} \eqno(\rm{PDE})
$$

By using Hamilton's maximum principle for tensor\cite{Ha86}, a tensor evolves by a nonlinear heat equation may be controlled by a corresponding (ODE) system.  While the (ODE) system corresponding to the above (PDE) is the following 
$$
\begin{aligned}
\frac{d}{d t}R_{ij} &=  2\sum\limits_{k,l}R_{ikjl}R_{kl}  \\
 \frac{d}{d t}R &= 2|Ric|^2.  \\ 
\end{aligned} \eqno(\rm{ODE})
$$

By a direct computation, we have the following lemma.

\begin{lemma} 
Let $b=(\lambda_3 + \lambda_4) - (\lambda_1 + \lambda_2)$, where $\{ \lambda_i\}$ are the eigenvalues of the Ricci tensor with $\lambda_1\le \lambda_2 \le \lambda_3 \le \lambda_4$. Then under the (ODE) system, we have
$$\frac 12 \frac d{dt}b \le 2b(\frac R3 + W_{1212} )  + (\lambda_1^2 + \lambda_2^2) - (\lambda_3^2 + \lambda_4^2 ) .$$
\end{lemma}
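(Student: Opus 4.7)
The plan is to work in an orthonormal frame $\{e_i\}_{i=1}^{4}$ that diagonalizes the Ricci tensor at the point of interest, so $R_{ij}=\lambda_i\delta_{ij}$. Setting $K_{ij}:=R_{ijij}$ (and $K_{ii}:=0$), the (ODE) specializes to $\tfrac12\dot\lambda_i=\sum_k K_{ik}\lambda_k$, and combining these four identities gives
$$\frac{1}{2}\frac{db}{dt}=\sum_{k=1}^{4}\lambda_k\bigl(K_{3k}+K_{4k}-K_{1k}-K_{2k}\bigr).$$
Using the first-Bianchi consequence $\lambda_i=\sum_{j\neq i}K_{ij}$ to eliminate each bracket at $k=1,2,3,4$ rearranges this to
$$\frac{1}{2}\frac{db}{dt}=(\lambda_1^2+\lambda_2^2)-(\lambda_3^2+\lambda_4^2)-2K_{12}(\lambda_1+\lambda_2)+2K_{34}(\lambda_3+\lambda_4).$$

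The next step is to insert the four-dimensional Weyl decomposition recalled at the start of the section, which in our frame reads $K_{ij}=W_{ijij}+\tfrac{1}{2}(\lambda_i+\lambda_j)-\tfrac{R}{6}$. In dimension four the trace conditions $\sum_{j}W_{ijij}=0$ force the dual-plane equality $W_{3434}=W_{1212}$, a two-line consequence of summing the four trace identities in pairs. Writing $p=\lambda_1+\lambda_2$ and $q=\lambda_3+\lambda_4$ (so $p+q=R$ and $q-p=b$), the two cross terms then collapse to
$$-2K_{12}p+2K_{34}q=2W_{1212}b+(q^2-p^2)-\tfrac{R}{3}b=2W_{1212}b+\tfrac{2R}{3}b,$$
using $q^{2}-p^{2}=(p+q)b=Rb$. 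Combining this with the diagonal contribution yields exactly
$$\frac{1}{2}\frac{db}{dt}=2b\Bigl(\tfrac{R}{3}+W_{1212}\Bigr)+(\lambda_1^{2}+\lambda_2^{2})-(\lambda_3^{2}+\lambda_4^{2}),$$
which is the claimed inequality (with equality, in fact, at any point where the Ricci eigenvalues are simple).

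The only genuine technical point I anticipate is that the $\lambda_i$ may fail to be smooth at instants when eigenvalues of $\mathrm{Ric}$ coincide; in particular $b$ itself is not $C^{1}$ through a crossing of $\lambda_2$ and $\lambda_3$. This is handled in the standard Hamilton way: at any such instant one freezes the currently ordered orthonormal eigenframe, performs the computation above in that frozen frame, and reads the right-hand side as a smooth upper support function for $b$. The stated ``$\le$'' is then the viscosity-sense bound on $\tfrac{d}{dt}b$ obtained by taking the infimum over admissible support functions. Beyond this routine book-keeping I do not expect any obstacle.
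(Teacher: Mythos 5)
Your proposal is correct and is essentially the paper's own argument: both work in an orthonormal frame diagonalizing $Ric$, use the four-dimensional decomposition $R_{ijij}=W_{ijij}+\tfrac12(\lambda_i+\lambda_j)-\tfrac R6$ together with the Weyl trace identities (giving $W_{1212}=W_{3434}$), and arrive at the same identity --- the paper merely organizes the bookkeeping by computing the frame derivatives of $\lambda_1+\lambda_2$ and $\lambda_3+\lambda_4$ separately and subtracting, whereas you treat $b$ directly via $\lambda_i=\sum_{j\neq i}R_{ijij}$. The eigenvalue-crossing issue you flag is handled no more carefully in the paper (it just records the two one-sided inequalities for the eigenvalue sums), so your support-function remark is at the same level of rigor as the original proof.
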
    

\begin{proof}
Indeed, since $W_{ijij}=W_{klkl}$ and $\sum\limits_i W_{ijiij}=0$
for any orthonormal four-frame $\{e_i, e_j, e_k, e_l\}$, we have
\begin{align*}
\frac 12 \frac{d}{d t} \Big( \lambda_1 + \lambda_2 \Big)
\ge& \sum\limits_{k=2,3,4} \lambda_k \Big( W_{1k1k}+\frac{\lambda_1+\lambda_k}2-\frac R6 \Big)  \\
     &    + \sum\limits_{l=1,3,4} \lambda_l \Big( W_{2l2l}+\frac{\lambda_2+\lambda_l}2-\frac R6 \Big)  \\
=& (\lambda_1+ \lambda_2) \Big( W_{1212}+\frac{\lambda_1+\lambda_2}2-\frac R6 \Big)  \\ 
  & +  \lambda_3 \Big(-W_{1212}+\frac{\lambda_1+\lambda_2}2 + \lambda_3 -\frac R3 \Big) \\
   &+  \lambda_4 \Big(-W_{1212}+\frac{\lambda_1+\lambda_2}2 + \lambda_4 -\frac R3 \Big)  \\
=& (W_{1212} + \frac R3) (\lambda_1+ \lambda_2-\lambda_3- \lambda_4)    + \lambda_3^2 + \lambda_4^2. 
\end{align*} 

Similarly, we have
$$\frac 12 \frac{d}{d t} \Big( \lambda_3 + \lambda_4 \Big) \le 
(W_{3434}  + \frac R3) (\lambda_3+ \lambda_4-\lambda_1- \lambda_2)    + \lambda_1^2 + \lambda_2^2 .$$

Subtract the above two inequalities, we obtain our assertion.
\end{proof}

\section{ A key pinched estimate } 

In this section, we will give a pinched estimate, which implies that the curvature $b$ described in Lemma 2.1 can become better and better under the Ricci flow.

\begin{lemma} 
Suppose we have a solution of Ricci flow $g(t)_{t\in [0, T]}$ on a four-manifold with uniformly bounded and nonnegative Ricci curvature, and satisfying the pinched condition $(*)$ 
for all $t \in [0, T]$.

If $R\ge r_0$ and $b \le \eta_0 R \le R$
holds for some positive constant $r_0>0$ and $\eta_0 > \tilde{c}$ at time $t=0$, where $\tilde{c} = \frac   {\sqrt{5+4\sqrt{3}}  -  (1+\sqrt{3})  }  {\sqrt{3}} \approx 0.41666 > \frac 13$ . Then there exist a positive constant $\delta =\delta(r_0, \eta_0, \gamma) \in (0, 1]$, such that 
$$b \le (\eta_0 -\delta t )R$$
holds for all $t \in [0, T']$, where $T'=\min\{ T, \frac{ \eta_0  - \tilde{c} }2  \}$.
\end{lemma}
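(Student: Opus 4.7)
The plan is to show that $\phi(t) := b - (\eta_0 - \delta t)R \le 0$ is preserved along the flow via a Hamilton-type maximum-principle argument. First observe that $R \ge r_0$ persists on $[0,T]$ since $\partial_t R \ge \Delta R$ under $Ric \ge 0$; moreover $b$ is convex as a function of $Ric$ (because $\lambda_1+\lambda_2$, being a minimum of $\mathrm{tr}(Ric|_V)$ over 2-planes $V$, is concave), so the sublevel set $\{b \le (\eta_0-\delta t)R\}$ is a closed convex family. By the standard ODE-to-PDE reduction (with a regularization of the Lipschitz function $b$), it suffices to verify the ODE inequality at any contact point $b = \eta R$: writing $\eta := \eta_0 - \delta t$, one needs $\tfrac{d}{dt}(b - \eta R) + \delta R \le 0$.

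Combining Lemma 2.1 with $\tfrac{d}{dt}R = 2|Ric|^2$, at such a contact point,
\begin{equation*}
\tfrac{1}{2}\tfrac{d}{dt}(b - \eta R) \le 2\eta R\bigl(\tfrac{R}{3} + W_{1212}\bigr) + (1-\eta)(\lambda_1^2+\lambda_2^2) - (1+\eta)(\lambda_3^2+\lambda_4^2).
\end{equation*}
The constraint $b = \eta R$ together with $\sum \lambda_i = R$ forces $\lambda_1+\lambda_2 = \tfrac{(1-\eta)R}{2}$ and $\lambda_3+\lambda_4 = \tfrac{(1+\eta)R}{2}$. Parametrizing the remaining freedom by $s := \lambda_2 - \lambda_1 \in [0, \tfrac{(1-\eta)R}{2}]$ and $u := \lambda_4 - \lambda_3 \in [0, \eta R - s]$ gives $\lambda_1^2+\lambda_2^2 = \tfrac{(1-\eta)^2R^2}{8} + \tfrac{s^2}{2}$, $\lambda_3^2+\lambda_4^2 = \tfrac{(1+\eta)^2R^2}{8} + \tfrac{u^2}{2}$, and $|\mathring{Ric}|^2 = \tfrac{\eta^2 R^2}{4} + \tfrac{s^2+u^2}{2}$.

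Next I would optimize the right-hand side over $(s,u)$ in this admissible range. The coefficient of $u^2$ is $-(1+\eta)$, so $u = 0$ is extremal; taking also $s = \tfrac{(1-\eta)R}{2}$ (the $\lambda_1 = 0$ case) reduces the Weyl-free part to $R^2$ times $\tfrac{4\eta}{3} + \tfrac{(1-\eta)^3}{2} - \tfrac{(1+\eta)^3}{4}$, which factors as $-\tfrac{(3\eta-1)(3\eta^2-2\eta+3)}{12}$, strictly negative iff $\eta > \tfrac{1}{3}$. For the Weyl contribution, a standard bound $|W_{1212}| \le c|W|$ together with $(*)$ produces an additional term of order $\eta R \gamma c \bigl|\tfrac{R}{2\sqrt{3}} - |\mathring{Ric}|\bigr|$. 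Since $\tilde{c} < \tfrac{1}{\sqrt{3}}$, at $\eta$ near $\tilde{c}$ the extremal $|\mathring{Ric}|$ lies below $\tfrac{R}{2\sqrt{3}}$ and the absolute value resolves to $\tfrac{R}{2\sqrt{3}} - |\mathring{Ric}|$. Solving the resulting cubic-type inequality at the worst-case $(s,u)$ yields exactly the root $\tilde{c}$ in the saturating case $\gamma = 1+\sqrt{3}$; for $\gamma < 1+\sqrt{3}$ and $\eta \in [\tilde{c}, \eta_0]$ one obtains a bound $\tfrac{d}{dt}(b - \eta R) \le -c_1(\eta_0, \gamma) R^2$ with $c_1 > 0$.

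Finally, since $R \ge r_0 > 0$, taking $\delta := \min\{1,\, c_1(\eta_0, \gamma)\, r_0\}$ gives $\delta R \le c_1 R^2$, so $\tfrac{d\phi}{dt} \le 0$ at contact points and $\phi \le 0$ is preserved. The truncation $T' \le \tfrac{\eta_0 - \tilde{c}}{2}$, together with $\delta \le 1$, keeps $\eta \ge \tilde{c}$ throughout $[0, T']$ so that $c_1$ stays bounded below. The main obstacle will be the joint $(s,u)$ optimization once the Weyl term is in play: since the bound on $|W|$ depends on $|\mathring{Ric}|$, and hence on $s,u$, the Weyl-corrected extremum could a priori drift away from $(s,u) = (\tfrac{(1-\eta)R}{2}, 0)$, so a careful algebraic verification is needed to confirm that the worst case stays there and that the resulting critical $\eta$ matches exactly $\tilde{c} = \tfrac{\sqrt{5+4\sqrt{3}} - (1+\sqrt{3})}{\sqrt{3}}$ at the saturating $\gamma = 1+\sqrt{3}$.
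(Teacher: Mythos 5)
Your overall framework (Hamilton's maximum principle on the convex family $\{R\ge r_0,\ b\le(\eta_0-\delta t)R\}$, reduction to the ODE inequality at a contact point $b=\eta R$, the parametrization by $s=\lambda_2-\lambda_1$, $u=\lambda_4-\lambda_3$, and the choice $T'\le\frac{\eta_0-\tilde c}{2}$ to keep $\eta\ge\frac{\eta_0+\tilde c}{2}>\tilde c$) coincides with the paper's. But the quantitative core is not carried out, and the step you do assert is wrong: you claim the worst case is $u=0$, $s=\frac{(1-\eta)R}{2}$ (i.e.\ $\lambda_1=0$) and that solving there produces $\tilde c$ when $\gamma=1+\sqrt3$. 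At that configuration one has $|\mathring{Ric}|^2=\frac{R^2}{8}(3\eta^2-2\eta+1)\ge\frac{R^2}{12}$ for every $\eta$ (equality only at $\eta=\frac13$), so your statement that "the extremal $|\mathring{Ric}|$ lies below $\frac{R}{2\sqrt3}$" is false precisely at the point you chose, and the Weyl term there is in fact harmless: the threshold coming out of that endpoint is $\eta>\frac13$ (as your own factorization $-\frac{(3\eta-1)(3\eta^2-2\eta+3)}{12}$ shows), not $\tilde c$. The configuration that actually forces the hypothesis $\eta_0>\tilde c$ is the opposite endpoint $s=u=0$, i.e.\ $\lambda_1=\lambda_2=\frac{(1-\eta)R}{4}$, $\lambda_3=\lambda_4=\frac{(1+\eta)R}{4}$, where $|\mathring{Ric}|=\frac{\eta R}{2}<\frac{R}{2\sqrt3}$ (since $\eta<\frac1{\sqrt3}$ in that regime) and the pinching $(*)$ allows an adverse $W_{1212}$ of size up to $\frac{\gamma}{2\sqrt3}\bigl(\frac{R}{2\sqrt3}-\frac{\eta R}{2}\bigr)$; positivity of the required quantity there amounts to $3\eta^2+1+2\gamma(\sqrt3\,\eta-1)>0$, whose root at $\gamma=1+\sqrt3$ is exactly $\tilde c$. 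So the "drift of the worst case" that you flag at the end as the remaining obstacle is not a technicality to be checked — it actually happens, and your proposed resolution (worst case stays at $\lambda_1=0$) fails.

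Two further missing ingredients that the paper supplies and your sketch leaves open: (i) the precise algebraic constant in the Weyl bound, $W_{1212}^2\le\frac23\sum_k W_{1k1k}^2\le\frac1{12}|W|^2$, i.e.\ $|W_{1212}|\le\frac1{2\sqrt3}|W|$, without which the exact value $\tilde c$ cannot be recovered from $(*)$; and (ii) the argument that the reduction to $u=0$ survives the Weyl correction, since $|\mathring{Ric}|$ (hence the admissible $|W|$) depends on $u$ — the paper handles this by a case split on the sign of $|\mathring{Ric}|-\frac{R}{2\sqrt3}$ and a monotonicity check of the resulting quadratic in the auxiliary variable (verifying $2(1+\eta)t>\frac{\gamma\eta R}{\sqrt3}$ using $\gamma<1+\sqrt3$), followed in the second case by evaluating a concave quadratic in $\tau$ at both endpoints $s=0$ and $s=\frac{(1-\eta)R}{2}$. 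Without these, the proposal does not establish the lemma.
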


\begin{proof}
Note that both the Ricci curvature tensor and the Weyl tensor are uniformly bounded, hence $g(t)$ has uniformly bounded curvature.

Consider the set $\Omega (t)_{t\in [0, T']}$ of matrices defined by the inequalities
$$ \Omega (t):  \left \{
       \begin{array}{lll}
       & R\ge r_0, \\[2mm]
       & b \le (\eta_0 -\delta t )R.\\
       \end{array}
    \right.
$$
The constant $\delta\in (0, 1]$ will be choosed later.

It is easy to see that $\Omega (t)$ is closed, convex and $O(n)$-invariant. By the assumptions at $t=0$ and the Hamilton's maximum principle for tensor, we only need to show the set $\Omega(t)$ is preserved by the (ODE) system. Indeed, we only need to look at points on the boundary of the set.

From the (ODE) system, we have
$$ \frac{d}{d t}R = 2|Ric|^2\ge 0,$$
which implies that $R\ge r_0$ for all $t\ge 0$. Thus the first inequality is preserved. To prove the second inequality, we only need to show that
$$\frac 12 b'\le (\eta_0 - \delta t ) \frac 12R' - \frac{ \delta}2 R =\eta \cdot \frac 12R' - \frac{ \delta}2 R, $$
where $b = (\eta_0-\delta t )R= \eta R $. 

By Lemma 2.1 and the (ODE) system,  it is suffice to show that
$$2b(\frac R3 + W_{1212} )  + (\lambda_1^2 + \lambda_2^2) - (\lambda_3^2 + \lambda_4^2 )\le \eta \sum\limits_i \lambda_i^2 - \frac{ \delta}2 R.  $$

It is equivalent to show that
$$I = (1+\eta ) (\lambda_3^2 + \lambda_4^2 ) - (1-\eta)(\lambda_1^2 + \lambda_2^2)  - 2\eta R (\frac R3 + W_{1212} )   \ge  \frac{ \delta}2 R.  \eqno(3.1)$$

Now $b = \eta R $, thus $\lambda_3+ \lambda_4 = \frac {1+\eta}2 R$ and $\lambda_1+ \lambda_2 = \frac {1-\eta}2 R$.
Denote by $x=\frac {\lambda_2 - \lambda_1 }2$ and $y=\frac {\lambda_4 - \lambda_3 }2$, which  satisfies
$$0\le x\le \frac {1-\eta}4 R,\quad y\ge 0, \quad x+y \le \frac {\eta}2 R. $$ 
And then
\begin{align*}
&\lambda_1=\frac {1-\eta}4 R-x, &\lambda_2=\frac {1-\eta}4 R+x,  \\
&\lambda_3=\frac {1+\eta}4 R-y, &\lambda_4=\frac {1+\eta}4 R+y. 
\end{align*}
Meanwhile, by a direct computation, we have
$$ W_{12}^2 \le \frac 23 \sum W_{1k}^2 \le \frac 23 \cdot \frac 18 |W|^2  \le \frac 1{12}\gamma^2  \Big( |\mathring{Ric}| - \frac 1{2\sqrt{3}}R \Big)^2. $$

In the following, we divide the argument into two cases.

{\bf Case 1: $ |\mathring{Ric} | \ge  \frac R{2\sqrt{3}} $. } In this case, 
\begin{align*}
|\mathring{Ric} |^2 =& \sum_i (\frac R4 - \lambda_i)^2  =\lambda_1^2 + \lambda_2^2 +  \lambda_3^2 + \lambda_4^2- \frac 14 R^2\\
=&   (\frac {1-\eta}2 )^2 R^2 - 2 \lambda_1 \lambda_2    + 2 (\frac {1+\eta}4 )^2 R^2 + 2 y^2   - \frac 14 R^2\\
=&    \frac 18 \cdot ( 3\eta^2 -2\eta +1) R^2 + 2 y^2 - 2 \lambda_1 \lambda_2 . \\
\end{align*}

Thus
$$W_{1212} \le  \frac {\gamma}{2\sqrt{3}} \cdot  \Big(  \sqrt{         \frac 18 \cdot ( 3\eta^2 -2\eta +1)R^2  + 2 y^2 - 2 \lambda_1 \lambda_2         }  - \frac R{2\sqrt{3}} \Big).  $$

So $I$ defined in (3.1) can be calculated as follow :
\begin{align*}
I=&  (1+ \eta) \Big[ 2 (\frac {1+\eta}4 R)^2 + 2 y^2 \Big] - (1-\eta) \Big[ (\frac {1-\eta}2 R)^2 - 2 \lambda_1 \lambda_2 \Big]  \\
   & - \frac 23 \eta R^2 - 2\eta R W_{1212}  \\
\ge&   \frac 1{24} ( -3 + 11 \eta - 9\eta^2 + 9 \eta^3  )   R^2 +  2(1+ \eta) y^2 +2 (1-\eta)  \lambda_1 \lambda_2  \\
     &  - 2\eta R \cdot  \frac {\gamma}{2\sqrt{3}} \cdot  \Big(  \sqrt{         \frac 18 \cdot ( 3\eta^2 -2\eta +1)R^2  + 2 y^2 - 2 \lambda_1 \lambda_2         }  - \frac R{2\sqrt{3}} \Big)  \\
 \ge&  \frac 1{24} ( -3 + 11 \eta - 9\eta^2 + 9 \eta^3  )  R^2 +  2(1+ \eta) y^2    \\
       & -  \frac {\gamma \eta R }{\sqrt{3}} \cdot  \Big[  \sqrt{         \frac 18 \cdot ( 3\eta^2 -2\eta +1) R^2 + 2 y^2          }  - \frac R{2\sqrt{3}}   \Big]  .  \\
\end{align*}

Denote by  $t=\sqrt{         \frac 18 \cdot ( 3\eta^2 -2\eta +1) R^2 + 2 y^2          } $. Since $y \ge 0$, we then have 
$$t\ge \frac R{2\sqrt{2}}  \sqrt{ 3\eta^2 -2\eta +1} .$$

And the above inequality becomes 
\begin{align*}
I \ge&  \frac 1{24} ( -3 + 11 \eta - 9\eta^2 + 9 \eta^3  )  R^2 \\
        & +  (1+ \eta) \Big[ t^2 -    \frac 18 \cdot ( 3\eta^2 -2\eta +1) R^2     \Big]     
           -  \frac {\gamma \eta R }{\sqrt{3}} \cdot  \Big( t - \frac R{2\sqrt{3}} \Big)\\
=& (1+ \eta) t^2  - \frac {\gamma \eta R}{\sqrt{3}} \cdot t   \\
  &  +  \frac 1{24} ( -3 + 11 \eta - 9\eta^2 + 9 \eta^3  ) R^2 -    \frac 18 \cdot (1+ \eta) \cdot ( 3\eta^2 -2\eta +1) R^2  +   \frac {\gamma \eta }{6}R^2  \\
\end{align*}

The RHS is a quadratic function of $t$, and increase respect to $t$. Indeed, we only need to show that $2(1+ \eta) t  - \frac {\gamma \eta R}{\sqrt{3}} >0$ holds for all $t\ge  \frac R{2\sqrt{2}}  \sqrt{ 3\eta^2 -2\eta +1}$.
It is easy to see that $ 12 (  \sqrt{3} - 1 ) > ( 1+\sqrt{3})^2> \gamma^2$, and then we have 
$$\frac{  (1+\eta) \sqrt{ 3\eta^2 -2\eta +1}  }{\eta} = ( \frac1{\sqrt{\eta}}+\sqrt{\eta} ) \sqrt{ 3\eta -2 +\frac1{\eta}} \ge 2\sqrt{ 2\sqrt{3} -2 } > \gamma.$$
Thus $2(1+ \eta) t\ge \frac R{\sqrt{2}}\cdot (1+ \eta)    \sqrt{ 3\eta^2 -2\eta +1}> \frac R{\sqrt{2}}\cdot \eta\gamma>\frac {\gamma \eta R}{\sqrt{3}} $.

The above monotonic property respect to $t$ implies that the RHS achieves the minimum value if and only if $t$ takes the minimum value, which is equivalent with $y=0$. Thus
\begin{align*}
I \ge& \frac 1{24} ( -3 + 11 \eta - 9\eta^2 + 9 \eta^3  )R^2   -  \frac {\gamma\eta R^2}{\sqrt{3}} \cdot  \Big(  \frac 1{2\sqrt{2}}  \sqrt{ 3\eta^2 -2\eta +1}  - \frac 1{2\sqrt{3}} \Big)\\
=& \frac {R^2}{24} \Big[ 9 (\eta - \frac 13)  \Big(   (\eta - \frac 13)^2 + \frac 89 \Big)    - 4 \gamma \eta \cdot \Big(   \sqrt{ 1 + \frac 92 (\eta - \frac 13)^2 }   -1   \Big)  \Big]\\
=& \frac {  3(\eta - \frac 13)R^2 }{8} \Big[   (\eta - \frac 13)^2 + \frac 89     -  2\gamma \cdot   \frac { \eta (\eta - \frac 13)}{  \sqrt{ 1 + \frac 92 (\eta - \frac 13)^2 } +1 }  \Big]\\
=& \frac {  3(\eta - \frac 13)R^2 }{8} \Big[ II +  2( 1+ \sqrt{3}- \gamma )\cdot   \frac { \eta (\eta - \frac 13)}{  \sqrt{ 1 + \frac 92 (\eta - \frac 13)^2 } +1 }  \Big],\\
\end{align*}
where
\begin{align*}
II =&  (\eta - \frac 13)^2 + \frac 89     -  2 ( 1+ \sqrt{3} ) \cdot   \frac { \eta (\eta - \frac 13)}{  \sqrt{ 1 + \frac 92 (\eta - \frac 13)^2 } +1 } \\
=&  \eta^2 - \frac 23 \eta +1     -  2 \eta (\eta - \frac 13)\\
  & -  2 \eta (\eta - \frac 13) \cdot  \Bigg[ \frac { 1+ \sqrt{3} }{  \sqrt{ 1 + \frac 92 (\eta - \frac 13)^2 } +1 } -1  \Bigg]\\
=& (1- \eta)(1+\eta)
   -  2 \eta (\eta - \frac 13) \cdot \frac { \sqrt{3} - \sqrt { 1+ \frac 92 (\eta - \frac 13)^2}  }{  \sqrt{ 1 + \frac 92 (\eta - \frac 13)^2 } +1 } \\
=& (1- \eta)(1+\eta)
   -  2 \eta (\eta - \frac 13) \cdot \frac 1{  \sqrt{ 1 + \frac 92 (\eta - \frac 13)^2 } +1 } \cdot  \frac { \frac 32 (1- \eta)(1+3\eta) }{ \sqrt{3} + \sqrt { 1+ \frac 92 (\eta - \frac 13)^2}  }   .  \\
\end{align*}

Note that $\eta =\eta_0 - \delta t \in [ \frac {\eta_0 + \tilde{c}}2, 1] \subset ( \frac 13, 1]$, thus
\begin{align*}
II \ge& (1+\eta) (1- \eta) -  2\eta \cdot \frac 23 \cdot \frac 1{  1 +1 } \cdot  \frac { \frac 32 (1- \eta)\cdot 4 }{ \sqrt{3} +1 }  \\
    =&  (1-\eta)\Big( 1+ \eta - \frac 4{1+\sqrt{3}} \eta \Big) \ge 0,
\end{align*}
and then
\begin{align*}
I \ge& \frac {  3(\eta - \frac 13)R^2 }{8} \cdot 2( 1+ \sqrt{3}- \gamma )\cdot   \frac { \eta (\eta - \frac 13)}{  \sqrt{3 } +1 }  \\
\ge& C_1(\eta_0, \gamma)R^2 \ge C_2(c_0, \eta_0, \gamma)R
\end{align*}
for some positive constant $C_2(r_0, \eta_0, \gamma)>0$.

{\bf Case 2: $ |\mathring{Ric} | <  \frac R{2\sqrt{3}} $. }
In this case, $$|\mathring{Ric} |^2 = 2( \frac{1-\eta}4 )^2 + 2( \frac{1-\eta}4 )^2 - \frac 14 + 2y^2 + 2 x^2=\frac 14 \eta^2 R^2 + 2y^2 + 2 x^2 ,$$
which implies that $\eta =\eta_0 - \delta t \in [ \frac {\eta_0 + \tilde{c}}2,  \frac 1{\sqrt{3}}) \subset ( \tilde{c}, \frac 1{\sqrt{3}})\subset ( \frac 13, \frac 1{\sqrt{3}})$, and
$$-W_{1212} \ge  \frac {\gamma}{2\sqrt{3}} \cdot  \Big(  \sqrt{      \frac 14 \eta^2 R^2 + 2y^2 + 2 x^2   }  - \frac R{2\sqrt{3}} \Big) . $$ 

By a direct computation, we have
\begin{align*}
I \ge& \frac {(1+ \eta)^3}8 R^2 -\frac {(1- \eta)^3}8 R^2 -\frac 23\eta R^2  +  2(1+ \eta) y^2  -2 (1-\eta) x^2   \\
       & + 2\eta R \cdot  \frac {\gamma}{2\sqrt{3}} \cdot  \Big(  \sqrt{ \frac 14 \eta^2 R^2 + 2y^2 + 2 x^2    } - \frac R{2\sqrt{3}} \Big)\\
  =& \frac {\eta}{12} (3\eta^2  + 1) R^2  +  2(1+ \eta) y^2  -2 (1-\eta) x^2   \\
       & +  \frac {\gamma \eta R}{\sqrt{3}} \cdot  \Big(  \sqrt{ \frac 14 \eta^2 R^2 + 2y^2 + 2 x^2    } - \frac R{2\sqrt{3}} \Big)\\
  \ge& \frac {\eta}{12} (3\eta^2 +1)  R^2  -2 (1-\eta) x^2  +  \frac {\gamma \eta R}{\sqrt{3}} \cdot  \Big(  \sqrt{ \frac 14 \eta^2 R^2 +  2 x^2    } - \frac R{2\sqrt{3}} \Big) .  \\
\end{align*}

Denote by $\tau=\sqrt{ \frac 14 \eta^2 R^2 +  2 x^2    } $. Since $x \in [0, \frac {1-\eta}4 R]$, we then have $$\tau \in \Big[ \frac {\eta}2 R, \  \frac R{2\sqrt{2}}  \sqrt{ 3\eta^2 -2\eta +1 }  \Big], \ and$$
$$ I \ge \frac {\eta}{12} (3\eta^2 + 1)  R^2  - (1-\eta) ( \tau^2 - \frac 14 \eta^2 R^2 )  +  \frac {\gamma \eta R}{\sqrt{3}} \cdot  \Big( \tau  - \frac R{2\sqrt{3}} \Big). $$

Similarly, the RHS is a quadratic function of $\tau$, and it can achieve the minimum value where $t$ take endpoint values, which is equivalent with $x=0$ or $x=\frac {1-\eta}4 R$.

If $x=\frac {1-\eta}4R$. Then
\begin{align*}
RHS \ge& \frac {\eta R^2}{12} (3\eta^2  + 1)   - \frac 18 (1-\eta)^3 R^2  \\
              &+  \frac {\gamma\eta R^2}{\sqrt{3}} \cdot  \Big( \frac 1{2\sqrt{2}} \sqrt{ 3\eta^2 -  2\eta +1    } - \frac 1{2\sqrt{3}} \Big) . 
\end{align*}

Since $\eta>\frac 13$, we have $\frac 1{2\sqrt{2}} \sqrt{ 3\eta^2 -  2\eta +1    } > \frac 1{2\sqrt{2}} \sqrt{ \frac 23    } =\frac 1{2\sqrt{3}} $. Thus
\begin{align*}
RHS>& \frac {\eta R^2}{12} (3\eta^2  + 1)   - \frac 18 (1-\eta)^3 R^2  \\
=& \frac {R^2}{24} \Big[  2\eta\cdot  (   3\eta^2 + 1 )   -  3 (1-\eta)^3    \Big] \\
=& \frac {R^2}{24} (   9\eta^3  - 9 \eta^2 + 11\eta -3 )  \\
=& \frac {R^2}{24} ( \eta - \frac 13) \Big [(  3\eta - 1 )^2 + 8 \Big ] = C_3(\eta_0, c_0)R.
\end{align*}

If $x=0$. Then 
$$\begin{aligned}
RHS \ge& \frac {\eta}{12} (3\eta^2 + 1) R^2  + \frac {\gamma \eta R^2}{\sqrt{3}} \cdot  \Big(  \frac {\eta}2 - \frac 1{2\sqrt{3}} \Big)  \\
=& \frac {\eta R^2}{12} \Big[  3\eta^2 + 1   +2\gamma (\sqrt{3}\eta  - 1)  \Big]  .
\end{aligned}   \eqno(3.2)$$

Note that $\sqrt{3}\eta  - 1<0$, we have
\begin{align*}
3\eta^2 + 1   &+2\gamma (\sqrt{3}\eta  - 1)  \\
>& 3\eta^2 + 1   + 2(1+\sqrt{3}) \cdot (\sqrt{3}\eta  - 1) \\
=& 3\eta^2  + ( 6 + 2\sqrt{3}) \eta  - (1+2\sqrt{3})\\
=& 3 \Big(    \eta    -  \frac   {\sqrt{5+4\sqrt{3}}  -  (1+\sqrt{3})  }  {\sqrt{3}}   \Big) \cdot  \Big(    \eta    +  \frac   {\sqrt{5+4\sqrt{3}}  +  (1+\sqrt{3})  }  {\sqrt{3}}   \Big)  .
\end{align*}   

Thus
$$RHS\ge C_4(\eta_0, c_0)R.$$

Combine the above argument, we have
$$I\ge C_5(c_0, \eta_0)R.$$

So by choosing $\delta=\delta(c_0, \eta_0, \gamma)=\min\{ 1, 2C_2, 2C_5\}$,  the inequality $(3.1)$ holds.  The proof of Lemma 3.1 is complete. 

\end{proof}

\section{A gap theorem of four-dimensional shrinking GRS}
Suppose $(M^4, g)$ is a complete GSS. Then there are a smooth function $f$ and a positive constant $\rho$, such that
$$R_{ij}+ \nabla_i\nabla_j f= \rho g_{ij}.$$

 It is well known that  there exist a self-similar solution of Ricci flow as follow
$$g(t)=\tau(t)\varphi_t^*(g), \ t\in ( -\infty, \frac 1{2\rho}),$$
where $\tau(t)=1-2\rho t$, and $\varphi_t$ is a family of diffeomorphisms.

Now we can prove Theorem 1.1.

\begin{proof}{\bf of Theorem 1.1.}
It is well known that any shrinking GRS with nonnegative Ricci curvature either is flat, or has positive scalar curvature $R\ge r_0>0$ for some positive constant $r_0=r_0(g)$. In the following, we always assume the soliton has positive scalar curvature $R\ge r_0>0$ (cf. \cite{Ni}).

We will argue by contradiction.  Denote by 
$$\eta_0=\sup\limits_{x\in (M^4, g)} \frac{b(x)}{R(x)}\le 1.$$ 
If $\eta_0 \le \tilde{c}$, then we have $\lambda_1 + \lambda_2 \ge \frac {1- \tilde{c}}2 R = c_0 R$, and we have done. 
If not, then $\eta_0 > \tilde{c}$. By the assumptions, we see that the self-similar solution $g(t)_{t\in [0, \frac 1{10\rho}]} $ has nonnegative and uniformly bounded Ricci curvature
with $g(0)=g$.

Then by Lemma 3.1, 
there exist a positive constant $\delta=\delta(r_0, \eta_0, \gamma) \in (0, 1]$, such that $$b \le (\eta_0 -\delta t )R.$$ is preserved under the Ricci flow for all small $t \in [0, T']$, where $T'=\min\{ \frac 1{10\rho}, \frac{ \eta_0  - \tilde{c} }2  \}$.

Hence we have
$$b \le (\eta_0 -\delta T' )R$$
at every point.
But this is impossible. Since there exist some point $p\in M$, such that $b(p) \ge (\eta_0 - \frac{\delta}2 T' )R(p)$ at time $t=0$. Note that
$g(t)$ only changes by scaling and a diffeomorphism on $M^4$, and then exist some point $q\in M$, such that at time $t=T'$, 
$$b(q, T')=\frac 1{1-2\rho T'} b(p) \ge \frac 1{1-2\rho T'} (\eta_0 -\frac{\delta}2 T' )R(p)=(\eta_0 -\frac{\delta}2 T' )R(q, T'),$$
which is contradictive with $b(q, T') \le (\eta_0 -\delta T' )R(q, T')$.

And we complete the proof of Theorem 1.1.

\end{proof}

Next, we follow a similar argument to prove Theorem 1.3.
\begin{proof}{\bf of Theorem 1.3.}
Obviously, we only need to show that 
$$\eta_0=\sup\limits_{x\in (M^4, g)} \frac{b(x)}{R(x)}\le \frac13.$$ 

If not, $\eta_0>\frac 13$, then we can prove the following assertion.

\begin{claim} 
Suppose we have a solution of Ricci flow $g(t)_{t\in [0, T]}$ on a four-manifold with uniformly bounded and nonnegative Ricci curvature, and satisfying the pinched condition $(**)$ 
for all $t \in [0, T]$.

If $R\ge r_0$ and $b \le \eta_0 R$
holds for some positive constant $r_0>0$ and $\eta_0 > \frac 13$ at time $t=0$ . Then there exist a positive constant $\delta=\delta(r_0, \eta_0, \gamma) \in (0, 1]$, such that 
$$b \le (\eta_0 -\delta t )R$$
holds for all $t \in [0, T']$, where $T'=\min\{ T, \frac{ \eta_0  - \frac 13 }2  \}$.
\end{claim}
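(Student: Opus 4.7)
The plan is to repeat, essentially verbatim, the argument of Lemma 3.1, with the improvement that the tighter pinched constant $\gamma \le \frac{1+\sqrt{3}}{\sqrt{3}}$ now allows the threshold to be lowered from $\tilde{c}$ to $\frac{1}{3}$. Concretely, I would work with the closed, convex, $O(n)$-invariant set
$$\Omega(t) = \{R \ge r_0\} \cap \{b \le (\eta_0 - \delta t)R\}$$
(with $\delta \in (0,1]$ to be chosen) and apply Hamilton's maximum principle. Since $\frac{d}{dt}R = 2|Ric|^2 \ge 0$, the first constraint is automatically preserved, so the claim reduces to verifying the algebraic inequality (3.1) at boundary points of the second constraint, where $b = \eta R$ and $\eta = \eta_0 - \delta t \in [\frac{\eta_0 + 1/3}{2}, 1]$.

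Parametrizing by $x = \frac{\lambda_2 - \lambda_1}{2}$ and $y = \frac{\lambda_4 - \lambda_3}{2}$ and splitting according to whether $|\mathring{Ric}| \ge \frac{R}{2\sqrt{3}}$, I would reuse the computations of Lemma 3.1. Case 1 ($|\mathring{Ric}| \ge \frac{R}{2\sqrt{3}}$) carries over unchanged: the bound
$$I \ge \frac{3(\eta - \frac{1}{3})R^2}{8}\Big[II + 2(1+\sqrt{3} - \gamma) \cdot \frac{\eta(\eta - \frac{1}{3})}{\sqrt{1 + \frac{9}{2}(\eta - \frac{1}{3})^2} + 1}\Big]$$
with $II \ge 0$ holds for every $\eta \in (\frac{1}{3}, 1]$, and both bracketed terms stay nonnegative because $\gamma \le \frac{1+\sqrt{3}}{\sqrt{3}} < 1 + \sqrt{3}$. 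In Case 2, the downward parabola in $\tau = \sqrt{\frac{1}{4}\eta^2 R^2 + 2x^2}$ attains its minimum on the interval at one of the two endpoints $x = 0$ or $x = \frac{1-\eta}{4}R$; the latter yields $\frac{R^2}{24}(\eta - \frac{1}{3})[(3\eta - 1)^2 + 8]$, which is already positive for $\eta > \frac{1}{3}$ with no further hypothesis on $\gamma$.

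The decisive subcase is $x = 0$, which reduces to the positivity of $3\eta^2 + 1 + 2\gamma(\sqrt{3}\eta - 1)$. Since $\sqrt{3}\eta - 1 < 0$ throughout Case 2, plugging in the maximal allowed value $\gamma = \frac{1+\sqrt{3}}{\sqrt{3}}$ produces the sharpest required lower bound
$$3\eta^2 + 2(1+\sqrt{3})\eta + 1 - \frac{2(1+\sqrt{3})}{\sqrt{3}}.$$
Using $(1+\sqrt{3})(1-\sqrt{3}) = -2$, a direct substitution shows this quadratic vanishes at $\eta = \frac{1}{3}$, hence factors as $3(\eta - \frac{1}{3})(\eta + \frac{3 + 2\sqrt{3}}{3})$, which is strictly positive for every $\eta > \frac{1}{3}$. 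Combining all cases gives $I \ge C(r_0, \eta_0, \gamma) R$ uniformly on $\eta \in [\frac{\eta_0 + 1/3}{2}, 1]$, after which one chooses $\delta = \delta(r_0,\eta_0,\gamma) \in (0,1]$ small enough to absorb the constant. I expect the main obstacle to be precisely this arithmetic check in the $x = 0$ subcase: it is the coincidence that $\frac{1+\sqrt{3}}{\sqrt{3}}$ is exactly the value of $\gamma$ at which the relevant quadratic first acquires $\frac{1}{3}$ as a root that forces the critical threshold to drop from $\tilde{c}$ to $\frac{1}{3}$, and this is what makes the conclusion $\lambda_1 + \lambda_2 \ge \frac{1}{3}R$ sharp in view of $\mathbb{S}^3 \times \mathbb{R}$.
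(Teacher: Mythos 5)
Your proposal is correct and follows essentially the same route as the paper: rerun the maximum-principle argument of Lemma 3.1 and observe that the only step needing the stronger hypothesis is the $x=0$ subcase of Case 2, i.e.\ the positivity of $3\eta^2+1+2\gamma(\sqrt{3}\eta-1)$, which for $\gamma\le\frac{1+\sqrt{3}}{\sqrt{3}}$ factors as $3\bigl(\eta-\tfrac13\bigr)\bigl(\eta+\tfrac{2+\sqrt{3}}{\sqrt{3}}\bigr)>0$ for $\eta>\tfrac13$ — exactly the computation in the paper (your $\frac{3+2\sqrt{3}}{3}$ equals the paper's $\frac{2+\sqrt{3}}{\sqrt{3}}$). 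Your check that Case 1 and the other Case 2 endpoint go through unchanged for $\eta\in(\tfrac13,1]$ matches the paper's implicit claim, so no gap.
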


For the proof of Claim 4.1, we check the argument of Lemma 3.1. Then we only need to get a positive lower bound of (3.2). Indeed,
\begin{align*}
3\eta^2 + 1   +2\gamma (\sqrt{3}\eta  - 1) 
\ge& 3\eta^2 + 1   +2\cdot \frac{1+\sqrt{3}}{\sqrt{3}} \cdot (\sqrt{3}\eta  - 1) \\
=& 3(\eta - \frac 13) \cdot (\eta + \frac{2+\sqrt{3}}{\sqrt{3}} ) \ge C(\eta_0).
\end{align*}

Thus Claim 4.1 holds, which develops a contradiction. And then we obtain Theorem 1.3.

\end{proof}

\bibliographystyle{amsplain}

\end{document}